\documentclass[11pt]{article}

\usepackage[margin=1.15in]{geometry}
\usepackage{amsmath,amssymb,amsthm,mathtools}
\usepackage{aliascnt}
\usepackage[hidelinks]{hyperref}
\usepackage[nameinlink,capitalise,noabbrev]{cleveref}
\usepackage{microtype}

\newtheorem{theorem}{Theorem}

\newaliascnt{proposition}{theorem}
\newtheorem{proposition}[proposition]{Proposition}
\aliascntresetthe{proposition}

\newaliascnt{lemma}{theorem}
\newtheorem{lemma}[lemma]{Lemma}
\aliascntresetthe{lemma}

\newaliascnt{corollary}{theorem}
\newtheorem{corollary}[corollary]{Corollary}
\aliascntresetthe{corollary}

\theoremstyle{remark}
\newaliascnt{remark}{theorem}
\newtheorem{remark}[remark]{Remark}
\aliascntresetthe{remark}

\newcommand{\Ezero}{E_0}
\newcommand{\ran}{\operatorname{ran}}
\newcommand{\symdiff}{\mathbin{\triangle}}
\newcommand{\Aut}{\operatorname{Aut}}

\title{Two questions on $E_0$-like generic equivalence}
\author{Jason Zesheng Chen}
\date{September 2026}

\begin{document}

\maketitle

\begin{abstract}
We answer Problems 8.1 and 8.3 from \cite{Tianyuan2026}.  We first note that the condition attributed there to ordinary Prikry forcing is not correct as written: the appropriate
formulation uses finite symmetric difference of the ranges of the generic
sequences, rather than eventual equality at the same coordinates.  We record
answers to the two problems for both formulations.  A length-$\omega$
Magidor forcing gives a genuinely Prikry-type example satisfying both
conditions.  Rigid real-adding forcing gives a broad source of further
examples, and a forcing of Jech and Shelah shows that the condition does not carry any large cardinal strength.  An $E_0$-invariant Jensen-type forcing of Kanovei and
Lyubetsky gives a stronger nontrivial example in which the generic reals in a
fixed extension form exactly one full $E_0$-class.  Finally, a simple recoding turns
the real-forcing previous examples into answers for the version of the problems with corrected condition.
\end{abstract}

\section{The two formulations}

Following \cite{Smythe}, consider a countable model $M$ of a
sufficiently large fragment of ZFC and a forcing notion $\mathbb P\in M$. We are interested in the equivalence relation $G \sim H\Leftrightarrow M[G]=M[H]$.
Section 8 of \cite{Tianyuan2026} takes the analysis of Prikry forcing (\cite{CS}) as its motivating example, where the generic filter is identified with an
increasing generic sequence
\[
    a=\langle a(n):n<\omega\rangle.
\]
The condition printed there is
\begin{equation*}
\tag{$\ast_{\mathrm{coord}}$}
 M[a]=M[b]
 \quad\Longleftrightarrow\quad
 \exists N<\omega\ \forall n>N\ (a(n)=b(n)).
\end{equation*}
This condition is not correct for ordinary Prikry forcing.\footnote{If
$a=\langle\alpha_0,\alpha_1,\ldots\rangle$ is Prikry generic over a
transitive $M$, then its shift
$b=\langle\alpha_1,\alpha_2,\ldots\rangle$ is again Prikry generic.  Moreover
$M[a]=M[b]$: one inclusion is immediate, and the other follows because
$\alpha_0\in M$.  Since $a$ is strictly increasing, however,
$a(n)\neq b(n)$ for every $n$.}

The corresponding statement with ranges is
\begin{equation*}
\tag{$\ast_{\mathrm{range}}$}
 M[a]=M[b]
 \quad\Longleftrightarrow\quad
 \bigl|\ran(a)\symdiff\ran(b)\bigr|<\omega.
\end{equation*}
This is the most plausible reading of Calderoni--Sinapova's phrasing, phrase
``coincide on a tail,''
\cite[Fact 4.2(1)]{CS}: the proof of the nontrivial direction assumes that
the symmetric difference is infinite and derives a contradiction.  The
converse is immediate for increasing sequences over a transitive ground
model: if their ranges differ by finitely many ordinals, each sequence is
recoverable from the other.

In \cite{Tianyuan2026}, Problem 8.1 asks for more examples of forcing that satisfy the condition; Problem 8.3 asks if the condition has large cardinal strength. For both the $(\ast_{\mathrm{coord}})$ and $(\ast_{\mathrm{range}})$ formulations of the problems, we supply further examples for 8.1 and answer 8.3 negatively.

\section{A Magidor-forcing example}

The closest example to the intended Prikry example is supplied by the
uniqueness theorem for Magidor sequences.  We use the formulation of Fuchs
\cite{Fuchs}.  Let
\[
    \mathbb M=\mathbb M(\vec U,\vec f)
\]
be a Magidor forcing of length $\alpha$, based on a measurable cardinal
$\kappa$ carrying a sequence
$\vec U=\langle U_\xi:\xi<\alpha\rangle$ of normal measures increasing in
the Mitchell order.  Its generic object is an increasing function
$c:\alpha\to\kappa$.

\begin{theorem}[Fuchs]\label{thm:fuchs}
The following is \cite[Theorem 7.5 and Corollary 7.6]{Fuchs}.  Let $c$ be $\mathbb M$-generic over a transitive ground model $M$, and let
$d\in M[c]$ be a function of the type of an $\mathbb M$-generic sequence.
Then the following are equivalent:
\begin{enumerate}
    \item $d$ is $\mathbb M$-generic over $M$;
    \item the set
    \[
        \{\xi<\alpha:c(\xi)\neq d(\xi)\}
    \]
    is finite and contains no limit ordinal;
    \item $M[c]=M[d]$.
\end{enumerate}
\end{theorem}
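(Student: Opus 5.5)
The plan is to prove the cycle $(2)\Rightarrow(1)\Rightarrow(3)\Rightarrow(2)$, working with the Mathias-type characterization of Magidor genericity from \cite{Fuchs}. That characterization says: an increasing $d:\alpha\to\kappa$ of the right type is $\mathbb M$-generic over $M$ exactly when, for every limit $\xi\le\alpha$ (with $\xi<\alpha$, or the top point as appropriate) and every $X\in M$ measured by the relevant measures $U_\eta(d(\xi))$ (respectively $U_\eta$ at $\kappa$), the set $d[\xi]$ is eventually contained in $X$ below $d(\xi)$, in the coherent sense dictated by $\vec f$. I would take this characterization as the basic tool and derive everything from it.

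\textbf{Step $(2)\Rightarrow(1)$.} Suppose $c$ and $d$ differ on a finite set $F$ containing no limit ordinals. Then at every limit point $\xi$ the value $d(\xi)=c(\xi)$ is unchanged. Moreover, below each limit, $d$ agrees with $c$ on a tail, because only finitely many successor coordinates were altered. The Mathias condition is a tail condition, so it transfers verbatim from $c$ to $d$. We still need $d$ to have the correct "type", meaning it is increasing and the values at successor points below each limit lie in the appropriate domains; this is exactly the hypothesis on $d$.

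\textbf{Step $(1)\Rightarrow(3)$.} We have $d\in M[c]$ by hypothesis. For the converse inclusion, the goal is to show that $c$ and $d$ differ at only finitely many non-limit coordinates, so that $c$ can be reconstructed from $d$ together with a finite patch lying in $M$. It is therefore cleanest to first prove $(1)\Rightarrow(2)$ directly, after which $(2)\Rightarrow(3)$ is immediate: the finite set of changed pairs is in $M$, and $c$ equals $d$ altered on that finite set.

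\textbf{Step $(1)\Rightarrow(2)$, and likewise $(3)\Rightarrow(2)$, which reduces to it.} This is the main obstacle. If $M[c]=M[d]$, then $d$ is definable in $M[c]$ and $c$ is definable in $M[d]$. I would first show that the two closed unbounded sets $\ran(c)$ and $\ran(d)$ share the same limit points. The argument: by the Prikry property and the Mathias characterization, every new closed subset of a point of cofinality $\omega$ or higher that lies in $M[c]$ is almost contained in $\ran(c)$, via the strong Prikry property and diagonal intersection of measure-one sets. Hence $\ran(d)$ is almost contained in $\ran(c)$ below each limit, and by symmetry the reverse holds. The order types then force $c(\xi)=d(\xi)$ at every limit $\xi$.

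Finiteness of the disagreement set comes from a compactness argument. Suppose the set of disagreements were infinite. Take the least limit of disagreement points. Below that limit the tail containment above fails, since infinitely many $d$-values would lie outside $\ran(c)$, or the enumeration would shift. A shift, however, would move a limit coordinate, which we have already ruled out. The key technical lemma is this almost-containment of any $M$-generic-like closed set in $M[c]$ inside $\ran(c)$. I would prove it by the standard argument: given a name for $d$, find via the Prikry property a direct extension deciding, for measure-one many candidate ordinals, whether they belong to $\ran(d)$, and then use normality to produce a measure-one set that $d$ must eventually avoid unless its values lie in $\ran(c)$.
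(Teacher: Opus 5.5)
The paper gives no proof of this statement; it only cites Fuchs. Your $(2)\Rightarrow(1)$ and $(2)\Rightarrow(3)$ are fine. The genuine gap is in the step that carries the rest of the cycle.

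You obtain $(1)\Rightarrow(3)$ via $(1)\Rightarrow(2)$. But your argument for $(1)\Rightarrow(2)$ opens with ``if $M[c]=M[d]$'' and gets the reverse almost-containment ``by symmetry'', that is, by running the key lemma for $c$ inside $M[d]$. Under (1) you only know $d\in M[c]$, so the argument is circular. Similarly, $(3)\Rightarrow(2)$ does not ``reduce'' to $(1)\Rightarrow(2)$ unless you first prove $(3)\Rightarrow(1)$. You never give that argument, even though your key lemma uses the Mathias criterion for $d$.

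What (1) actually yields is one-sided: $\ran(d)\setminus\ran(c)$ is finite below each limit. This cannot be upgraded. Take the $\omega$-block of successor coordinates below the first limit; in the one-measure (Prikry) case this is the whole sequence. Thin it, say $d(n)=c(2n)$ for $n<\omega$ and $d(\xi)=c(\xi)$ for $\xi\ge\omega$. By the very characterization you adopt, this preserves type, continuity and all the eventual-containment conditions, and $d\in M[c]$. So $d$ satisfies (1), yet it disagrees with $c$ at infinitely many coordinates.

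Your finiteness step also fails: a shift need not move a limit coordinate. Let $d(n)=c(n+1)$ for $n<\omega$ and $d(\xi)=c(\xi)$ for $\xi\ge\omega$. Then the following hold:
\begin{itemize}
\item $d(\omega)=\sup_n c(n+1)=c(\omega)$, so no limit value changes.
\item $\ran(d)=\ran(c)\setminus\{c(0)\}$, and $d$ satisfies your criterion.
\item $M[d]=M[c]$, since $c(0)\in M$.
\end{itemize}
Yet $c(n)\neq d(n)$ for every $n<\omega$. This is exactly the Prikry shift from the paper's own footnote, which your compactness argument would claim to rule out in the one-measure case.

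So even granting (1) and $c\in M[d]$, your almost-containment lemma delivers only agreement at limit coordinates together with $|\ran(c)\symdiff\ran(d)|<\omega$. That is the range formulation, not the coordinatewise clause (2). In the setting you describe, with freely chosen order-zero points at successor coordinates, the equivalence as printed fails: the thinning example refutes $(1)\Rightarrow(2)$ and $(1)\Rightarrow(3)$, and the shift refutes $(3)\Rightarrow(2)$. A correct argument has to start from the precise hypotheses and conclusion of Fuchs's Theorem 7.5 and Corollary 7.6, which the paper does not reproduce, rather than from this coordinatewise version.
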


\begin{proposition}\label{prop:magidor}
Suppose that $M$ has a measurable cardinal $\kappa$ carrying a
Mitchell-increasing sequence $\langle U_n:n<\omega\rangle$ of normal
measures, and let $\mathbb M\in M$ be the corresponding Magidor forcing of
length $\omega$.  For any two $\mathbb M$-generic sequences $c,d$ over $M$,
\[
    M[c]=M[d]
    \quad\Longleftrightarrow\quad
    \exists N<\omega\ \forall n>N\ (c(n)=d(n)).
\]
Moreover,
\[
    M[c]=M[d]
    \quad\Longleftrightarrow\quad
    \bigl|\ran(c)\symdiff\ran(d)\bigr|<\omega.
\]
Thus length-$\omega$ Magidor forcing satisfies both
$(\ast_{\mathrm{coord}})$ and $(\ast_{\mathrm{range}})$.
\end{proposition}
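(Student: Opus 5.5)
The plan is to derive both equivalences from \cref{thm:fuchs}, specialized to $\alpha=\omega$. The length-$\omega$ case is particularly clean: the only limit ordinal below $\omega$ is $0$. Moreover, in Fuchs's formulation $c(0)$ is fixed by convention (or absent), so the clause ``contains no limit ordinal'' only rules out disagreement at $0$. I will check this convention first. If the formulation instead lets $c(0)$ be a genuine coordinate, I will note that any disagreement at a single coordinate is absorbed by the ``eventually equal'' clause anyway.

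\textbf{First equivalence.} Let $c,d$ both be $\mathbb M$-generic over $M$.

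For the forward direction, suppose $M[c]=M[d]$. Then $d\in M[c]$ and $d$ has the type of an $\mathbb M$-generic sequence. By (3)$\Rightarrow$(2) of \cref{thm:fuchs}, the set $\{n:c(n)\neq d(n)\}$ is finite. So some $N$ bounds it, which gives the coordinatewise condition.

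For the converse, suppose $c(n)=d(n)$ for all $n>N$. I need $d\in M[c]$ in order to apply \cref{thm:fuchs}. This holds because $d$ is obtained from $c$ by changing finitely many values, and the finite list $\langle d(0),\dots,d(N)\rangle$ is a finite sequence of ordinals, hence lies in $M$. Since $d$ is generic by hypothesis, (1)$\Rightarrow$(3) yields $M[c]=M[d]$. Alternatively, one can avoid the theorem here: $c$ and $d$ are each recoverable from the other together with a finite object of $M$, so the two models are equal directly.

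\textbf{Second equivalence.} I reduce to the first one by comparing the coordinatewise and range conditions for strictly increasing $\omega$-sequences of ordinals.

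If $c$ and $d$ agree above $N$, then $\ran(c)\symdiff\ran(d)\subseteq\{c(n),d(n):n\le N\}$, which is finite.

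Conversely, suppose $\ran(c)\symdiff\ran(d)$ is finite. Both sequences are increasing, so their tails above $\max(\ran(c)\symdiff\ran(d))$ coincide as sets. Hence there are $k$ and $m$ with $c(n+k)=d(n+m)$ for all $n$, and $\{c, d\}$ determine each other modulo finite information in $M$. This gives $M[c]=M[d]$ directly, as in the Prikry case discussed in the footnote. The reverse implication, from $M[c]=M[d]$ to a finite symmetric difference, follows from the first equivalence and the observation above.

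\textbf{Expected obstacle.} The only subtle point is that a finite symmetric difference of ranges permits a \emph{shift} ($k\neq m$), which the coordinatewise condition seems to forbid. The first equivalence resolves this: a shift would produce $M[c]=M[d]$, hence coordinatewise eventual agreement, and thus forces $k=m$ for generics. This rigidity reflects the fact that Magidor generics carry the Mitchell-order information at each coordinate, so $c(n)$ has ``order $n$'' and cannot be shifted, unlike in Prikry forcing. I would also double-check that the hypothesis ``of the type of an $\mathbb M$-generic sequence'' is satisfied in every application, which is immediate since $d$ is itself generic.
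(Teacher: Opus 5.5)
Your proposal is correct and is essentially the paper's proof. The forward directions come from Fuchs's theorem, (3)$\Rightarrow$(2), which gives finitely many disagreeing coordinates and hence a finite symmetric difference of ranges; both converses recover each sequence from the other plus a finite object in $M$, and your index-shift description of the common tail is equivalent to the paper's enumeration of $(\ran(c)\setminus A)\cup B$. Your remarks about coordinate $0$ and about shifts are harmless but unnecessary, since only the finiteness clause of (2) is ever used.
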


\begin{proof}
Suppose first that $M[c]=M[d]$.  Then $d\in M[c]$, so
\Cref{thm:fuchs} applies.  It follows in particular that $c(n)=d(n)$ for all but finitely many
$n$.  This also
implies that the ranges of $c$ and $d$ have finite symmetric difference.

Conversely, suppose that $c$ and $d$ differ at only finitely many
coordinates.  The finite set
\[
    F=\{(n,d(n)):c(n)\neq d(n)\}
\]
belongs to $M$, and $d$ is definable from $c$ and $F$; the same argument with
$c$ and $d$ interchanged gives $M[c]=M[d]$.  Likewise, suppose that the
ranges have finite symmetric difference.  The finite sets
\[
    A=\ran(c)\setminus\ran(d)
    \quad\text{and}\quad
    B=\ran(d)\setminus\ran(c)
\]
belong to $M$, and $d$ is the increasing enumeration of
$(\ran(c)\setminus A)\cup B$.  Again $M[c]=M[d]$.
\end{proof}

This gives a positive answer to Problem 8.1, in both formulations, within
the class of genuinely Prikry-type forcings.

\section{Rigid forcing}

There is also a general source of ZFC-examples.  Recall the Vop\v{e}nka--H\'ajek theorem in the form
used by Smythe.

\begin{theorem}[Vop\v{e}nka--H\'ajek]
\label{thm:VH}
This is the form recorded in \cite[Theorem 2.16]{Smythe}.  Let $\mathbb B\in M$ be a complete Boolean algebra, and let $G,H$ be
$\mathbb B$-generic filters over $M$.  If $M[G]=M[H]$, then there is an
involutive automorphism $e\in\Aut^M(\mathbb B)$ such that
$H=e''G$.
\end{theorem}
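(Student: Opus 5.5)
The plan is the standard argument through names and Boolean values, followed by a cut-and-paste step that turns an isomorphism between two restrictions of $\mathbb B$ into an involution of the whole algebra.

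\textbf{Step 1: names and a condition.} Since $H\in M[G]$ and $G\in M[H]$, I fix $\mathbb B$-names $\tau,\sigma\in M$ with $\tau_G=H$ and $\sigma_H=G$. Write $\dot G$ for the canonical name. By the forcing theorem I choose $p\in G$ forcing the following three statements:
\begin{itemize}
\item ``$\tau$ is a $\check{\mathbb B}$-generic filter over $\check M$'';
\item ``$\sigma^{\tau}=\dot G$'', where $\sigma^{\tau}$ denotes $\sigma$ evaluated by the filter $\tau$;
\item ``$\dot G\neq\tau$'', in the case $G\neq H$.
\end{itemize}
If $G=H$, I simply take $e=\mathrm{id}$ and stop.

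\textbf{Step 2: a complete embedding and its inverse.} In $M$ define $\pi:\mathbb B\to\mathbb B\restriction p$ by $\pi(b)=\|\check b\in\tau\|\wedge p$. Because $p$ forces $\tau$ to be a generic filter, $\pi$ preserves complements relative to $p$ and arbitrary joins. The join case holds because a generic filter meets every maximal antichain, together with genericity of $\tau$ for the antichain $\{b_i\}\cup\{-\bigvee b_i\}$. So $\pi$ is a complete homomorphism, and for every $b$ we have $b\in H$ iff $\pi(b)\in G$.

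Set $q=\|\check p\in\tau\|$ pulled back appropriately; concretely, $q$ is the least element with $\pi(q)=p$, which lies in $H$. Define $\rho$ from $\sigma$ symmetrically.

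The clause $\sigma^\tau=\dot G$ in $p$ gives $\pi\circ\rho=\mathrm{id}$ on $\mathbb B\restriction p$. Hence $\pi\restriction(\mathbb B\restriction q)$ is an isomorphism $f:\mathbb B\restriction q\to\mathbb B\restriction p$, and its inverse $f^{-1}$ satisfies $f^{-1}(b)\in H\iff b\in G$. \textbf{I expect this step to be the main obstacle.} The bookkeeping to verify injectivity and surjectivity below the right conditions (mutual inverseness, and correctly identifying $q$) is where care is needed; I would first prove surjectivity via $\pi(\rho(b))=b$ for $b\le p$, and then injectivity from $\rho\circ\pi=\mathrm{id}$ below $q$, which comes from a symmetric condition in $H$.

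\textbf{Step 3: making the two pieces disjoint.} Let $g=f^{-1}:\mathbb B\restriction p\to\mathbb B\restriction q$, so that $g''(G\restriction p)$ generates $H$. Since $G\neq H$, pick $b\in G\setminus H$; then $-b\in H$. The element $g(p\wedge b)$ lies in $H$, so $g(p\wedge b)\wedge -b\in H$. Let
\[
p'=g^{-1}\bigl(g(p\wedge b)\wedge -b\bigr).
\]
Then $p'\in G$, $p'\le b$, and $g(p')\le -b$. Thus $p'$ and $q'=g(p')$ are disjoint, with $p'\in G$ and $q'\in H$.

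\textbf{Step 4: the involution.} Define $e\in M$ by
\[
e(x)=g(x\wedge p')\vee g^{-1}(x\wedge q')\vee\bigl(x\wedge -(p'\vee q')\bigr).
\]
Because $p'\perp q'$, this is a well-defined automorphism of $\mathbb B$, and $e\circ e=\mathrm{id}$, so $e$ is involutive.

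Finally, $e''G$ is a $\mathbb B$-generic filter, since $e$ is an automorphism in $M$. It contains $g''(G\restriction p')\subseteq H$, so $e''G$ and $H$ are compatible generic filters. Two compatible generic filters are equal: if $x\in e''G\setminus H$, then $-x\in H$, and some element of $e''G$ lies in $H$ below both, giving $0$ in a filter. Therefore $H=e''G$.
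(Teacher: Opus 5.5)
The paper does not prove this theorem; it quotes it from Smythe, so there is no argument in the paper to compare yours against. Your argument is the standard proof and is correct in outline. Steps 3 and 4 work: you shrink to disjoint $p'\in G$ and $q'=g(p')\in H$, then glue $g$, $g^{-1}$ and the identity. Step 2, which you flagged, contains one slip and some machinery you do not need.

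\textbf{The slip in Step 2.} Suppose $\rho$ is defined literally symmetrically, $\rho(b)=\|\check b\in\sigma\|\wedge r$ with $r\in H$. Then for $b\le p$ and generic $K\ni p$ you get $\pi(\rho(b))\in K$ iff $b\in K$ and $r\in\tau_K$. So
\[
\pi(\rho(b))=b\wedge\pi(r),
\]
which equals $b$ only if $p\Vdash\check r\in\tau$. The clause $\sigma^\tau=\dot G$ in $p$ therefore does not give $\pi\circ\rho=\mathrm{id}$ as stated. There are two easy fixes. You can strengthen $p$ to $p\wedge\|\check r\in\tau\|\in G$. More simply, you can drop the $\wedge r$ altogether. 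For $b\le p$ and generic $K\ni p$,
\[
\pi(\|\check b\in\sigma\|)\in K \iff \|\check b\in\sigma\|\in\tau_K \iff b\in\sigma_{\tau_K}=K,
\]
by the truth lemma for the generic $\tau_K$. Hence $\pi(\|\check b\in\sigma\|)=b$, which is surjectivity, and it uses only the clauses forced by $p$.

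\textbf{The unnecessary machinery.} Injectivity needs no $\rho\circ\pi$ and no symmetric condition in $H$. With $k=\bigvee\{x:\pi(x)=0\}$ you have $\pi(k)=0$, because $\pi$ preserves arbitrary joins. Your concrete choice $q=-k$ then makes $\pi$ injective on $\mathbb B\restriction q$ immediately. It also gives $\pi(x)=\pi(x\wedge q)$ for every $x$, so $\pi\restriction(\mathbb B\restriction q)$ is onto $\mathbb B\restriction p$. Your first description, $q=\|\check p\in\tau\|$, is wrong: that element lies in $G$ iff $p\in H$. Keep only the "least element with $\pi(q)=p$" definition.

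\textbf{Minor points.}
\begin{itemize}
\item ``Generic over $\check M$'' is not a statement of the forcing language. Replace it by ``$\tau$ is a filter meeting every member of $\check{\mathcal A}$'', where $\mathcal A\in M$ is the set of maximal antichains of $\mathbb B$.
\item The clause $\dot G\neq\tau$ is never used.
\item In Step 4, ``compatible'' needs an actual justification. For $y\in G$, $e(y)\ge e(y\wedge p')=g(y\wedge p')\in H$, so $e''G\subseteq H$. Equality follows because $e''G$ is an ultrafilter.
\item Your sentence ``some element of $e''G$ lies in $H$ below both'' should read ``below $x$''.
\end{itemize}
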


A complete Boolean algebra is \emph{rigid} if it has no nonidentity
automorphism.

\begin{proposition}\label{prop:rigid-general}
Let $M$ be a countable transitive model, let $\mathbb B\in M$ be a complete
Boolean algebra which is rigid in $M$, and suppose that $\dot x$ is a
$\mathbb B$-name such that
\[
    M[G]=M[x_G]
\]
for every $\mathbb B$-generic filter $G$ over $M$, where
$x_G=\dot x^G\in\omega^\omega$.  Then for any two such generic filters
$G,H$,
\[
    M[x_G]=M[x_H]
    \quad\Longleftrightarrow\quad
    x_G=x_H
    \quad\Longleftrightarrow\quad
    x_G\,\Ezero\,x_H.
\]
In particular, the generic reals satisfy $(\ast_{\mathrm{coord}})$.
\end{proposition}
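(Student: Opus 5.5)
The plan is to prove the three statements equivalent by a cycle of implications: first that $M[x_G]=M[x_H]$ implies $x_G=x_H$, then that $x_G=x_H$ implies $x_G\,\Ezero\,x_H$, and finally that $x_G\,\Ezero\,x_H$ implies $M[x_G]=M[x_H]$. The middle implication is trivial, since equal reals are $\Ezero$-equivalent.

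For the first implication, assume $M[x_G]=M[x_H]$. By the hypothesis on $\dot x$, this gives $M[G]=M[x_G]=M[x_H]=M[H]$. Both $G$ and $H$ are $\mathbb B$-generic over $M$, so \Cref{thm:VH} applies. It yields an automorphism $e\in\Aut^M(\mathbb B)$ with $H=e''G$. Since $\mathbb B$ is rigid in $M$, $e$ must be the identity. Hence $H=G$, and therefore $x_H=\dot x^H=\dot x^G=x_G$.

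This step is where the real content lies, and the only point needing care is the application of \Cref{thm:VH}. We pass from equality of the models generated by the reals to equality of the models generated by the filters, which is exactly what the assumption $M[G]=M[x_G]$ for \emph{every} generic $G$ provides. We also note that the automorphism obtained lies in $M$, which is where rigidity is assumed, so no automorphism outside $M$ needs to be ruled out.

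For the last implication, assume $x_G(n)=x_H(n)$ for all $n>N$. The finite function $F=\{(n,x_H(n)):n\le N\}$ is a finite set of pairs of natural numbers, hence an element of $M$. Then $x_H$ is defined in $M[x_G]$ from $x_G$ and $F$, so $x_H\in M[x_G]$ and $M[x_H]\subseteq M[x_G]$. The symmetric argument gives the reverse inclusion, so $M[x_G]=M[x_H]$.

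Combining the cycle, $M[x_G]=M[x_H]$ holds if and only if $x_G\,\Ezero\,x_H$. Since $\Ezero$ on $\omega^\omega$ is precisely eventual equality at the same coordinates, this is $(\ast_{\mathrm{coord}})$ for the generic reals. In fact the right-hand side of $(\ast_{\mathrm{coord}})$ collapses to outright equality.
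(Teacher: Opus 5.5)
Your proposal is correct and is essentially the paper's proof: you pass from $M[x_G]=M[x_H]$ to $M[G]=M[H]$, apply \Cref{thm:VH} together with rigidity of $\mathbb B$ in $M$ to get $G=H$, and close the cycle by observing that a finite modification of a real is coded by an element of $M$. The only difference is bookkeeping: the paper routes $x_G\,\Ezero\,x_H$ back through the first step explicitly to recover $x_G=x_H$, which your cycle of implications already covers.
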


\begin{proof}
If $M[x_G]=M[x_H]$, then $M[G]=M[H]$.  By \Cref{thm:VH}, there is an
automorphism $e\in\Aut^M(\mathbb B)$ with $H=e''G$.  Since $\mathbb B$ is
rigid in $M$, $e$ is the identity; hence $G=H$ and $x_G=x_H$.

Equality clearly implies eventual equality.  Conversely, if
$x_G\,\Ezero\,x_H$, then the two reals differ on only finitely many
coordinates.  The finite modification taking one real to the other is coded by an
element of $M$, so $x_G$ and $x_H$ are interdefinable over $M$.  Thus
$M[x_G]=M[x_H]$, and the first part of the proof then gives $x_G=x_H$.
\end{proof}

The hypothesis of this proposition is realized in ZFC.

\begin{theorem}[Jech--Shelah \cite{JS}]
\label{thm:JS}
There is a perfect-tree forcing $\mathbb P$ which adds a generic branch
$g\in\omega^\omega$ such that
\[
    V[G]=V[g]
\]
and whose complete Boolean algebra $\mathbb B(\mathbb P)$ is rigid.
\end{theorem}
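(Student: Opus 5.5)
The plan is to reconstruct the Jech--Shelah argument in the paper's setting. We build a forcing $\mathbb P$ of perfect trees $p\subseteq\omega^{<\omega}$, ordered by inclusion. The generic branch is $g=\bigcup\{\mathrm{stem}(p):p\in G\}$, and $G$ is recovered from $g$ as $\{p:g\in[p]\}$, which gives $V[G]=V[g]$. This part is routine once $\mathbb P$ has a fusion property: we show by a Sacks-style fusion argument that the sets $\{p: g\in[p]\}$ generate the generic filter, so $G$ is definable from $g$ in $V[g]$.

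The substance is rigidity of $\mathbb B(\mathbb P)$. A nontrivial automorphism $e$ would be witnessed, below some condition $p$, by a name $\dot h$ such that $e$ maps $g$ to a branch $e(g)=\dot h^G$ with $V[g]=V[e(g)]$ and $e(g)\neq g$ on a dense set of generics. By continuous reading of names (fusion), we may shrink to $q\le p$ on which $g\mapsto e(g)$ is induced by a continuous injective map $f:[q]\to\omega^\omega$ with $f(x)\neq x$. The tree forcing is designed so that its branching is highly inhomogeneous. Concretely, each splitting node at level $n$ of a condition has branching degree tied to a fixed fast-growing sequence, and different subtrees are non-isomorphic in a way that cannot be matched. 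Using this, we find $r\le q$ such that $f''[r]$ is incompatible with $r$ in the forcing sense, and yet $f''[r]$ contains the trace of a condition $r'$. This contradicts that $e$ maps $r$ to a condition below $e(r)$ which must be compatible with $r$ along generics for which $e(g)$ is still generic. Alternatively, $e$ must send $r$ to a condition with the same splitting profile, and the profiles force $f=\mathrm{id}$ on a dense set, so $e=\mathrm{id}$.

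The order of steps is:
\begin{enumerate}
\item Define the tree forcing with prescribed, level-dependent splitting degrees.
\item Prove fusion and properness, which gives $\omega_1$ preserved and continuous reading.
\item Show that $G$ is recoverable from $g$.
\item Reduce an arbitrary automorphism to a continuous map on branches below a condition.
\item Use the splitting-degree rigidity to show this map is the identity on a dense set, hence $e=\mathrm{id}$.
\end{enumerate}

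The main obstacle is the last step. An automorphism of $\mathbb B$ need not preserve $\mathbb P$ itself, only its completion. So one must argue purely from the induced branch map that no condition can be mapped onto a non-identical copy of itself. I would first try the standard counting argument: compare the number of splitting nodes along corresponding levels in $r$ and in the condition deciding $f$ on $r$, and choose the degree sequence growing so fast that these counts cannot agree unless $f$ fixes stems.

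Realistically, since the statement is cited from \cite{JS}, in the paper I would simply invoke it. I would only note that it holds in ZFC with no large cardinal hypothesis, which is what answers Problem 8.3.
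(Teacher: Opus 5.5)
The paper gives no proof of this statement. It is quoted from \cite{JS} and used as a black box; only its ZFC provability matters, in \Cref{cor:rigid}. So your closing sentence is exactly the paper's treatment and suffices. The reconstruction you sketch before it, however, would not work as a proof, and the failure is at the core of the argument.

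\textbf{Step 1 cannot yield rigidity.} Let $\sigma$ be the homeomorphism of $\omega^\omega$ that swaps the values $0$ and $1$ at coordinate $0$ and fixes all other coordinates. On trees, $p\mapsto\sigma''p$ is a level-preserving relabelling that preserves inclusion and the number of successors of every node. Suppose membership in $\mathbb P$ is decided by any criterion invariant under such relabellings, such as level-dependent splitting degrees or pairwise non-isomorphism of subtrees. Then $p\mapsto\sigma''p$ is an automorphism of $\mathbb P$. It sends any condition whose branches all pass through $\langle 0\rangle$ to one whose branches all pass through $\langle 1\rangle$. These have disjoint bodies and are incompatible, so $\sigma$ induces a nonidentity automorphism of $\mathbb B(\mathbb P)$. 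Every level-by-level count of splitting nodes is preserved by $\sigma$, so the counting argument you propose for step 5 cannot force $f=\mathrm{id}$.

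\textbf{The contradiction aimed for in step 5 is misdirected, and the key idea is missing.} A nonidentity automorphism $e$ is precisely one with $e(a)\wedge a=0$ for some $a\neq 0$. So an $r$ whose image $f''[r]$ is incompatible with $r$ contradicts nothing. The contradiction has to come from genericity: you need $r\le q$ below which $f(g)$ is \emph{not} $\mathbb P$-generic. For example, a ground-model maximal antichain $A$ with $f''[r]\cap[s]=\emptyset$ for every $s\in A$ would do. Achieving this requires $\mathbb P$ to be thin and label-dependent, built so that every candidate continuous injection $f\neq\mathrm{id}$ is defeated below some condition. Jensen-type constructions do this by diagonalization, and this is where the actual work in \cite{JS} lies. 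No uniform, shape-based recipe can supply it.

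\textbf{A smaller point.} Recovering $G$ as $\{p:g\in[p]\}$ is not a consequence of fusion. It needs that incompatible conditions can be refined to conditions with disjoint bodies, which is not automatic for a thinned-out family of perfect trees.
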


\begin{corollary}\label{cor:rigid}
For every suitable countable transitive model $M$, the Jech--Shelah forcing
as computed in $M$ satisfies $(\ast_{\mathrm{coord}})$. 
Consequently, Problem 8.1 with the printed condition has a positive answer,
whereas Problem 8.3 has a negative answer: the condition does not even imply
that $M$ has an inaccessible cardinal.
\end{corollary}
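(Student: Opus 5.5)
The plan is to derive the corollary directly from \Cref{prop:rigid-general}, using \Cref{thm:JS} to supply its hypotheses inside $M$. First, fix a countable transitive model $M$ of a sufficiently large fragment of ZFC; ``suitable'' here means $M$ satisfies enough of ZFC for the Jech--Shelah construction and its proof to be carried out inside $M$. Applying \Cref{thm:JS} in $M$ gives a forcing $\mathbb P\in M$ and a name $\dot g$ for the generic branch with two properties: $M\models$ ``$\mathbb B(\mathbb P)$ is rigid'', and $M\models$ ``$\Vdash V[\dot G]=V[\dot g]$''. The second fact, evaluated at any $\mathbb B(\mathbb P)$-generic $G$ over $M$, gives $M[G]=M[g_G]$, where $g_G\in\omega^\omega$. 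These are exactly the hypotheses of \Cref{prop:rigid-general} with $\dot x=\dot g$. The only care needed is to check that ``$V[G]=V[g]$'' is a forcing statement: it says that $G$ is definable from $g$ over the ground model, i.e.\ $G=\{p\in\mathbb P: g\in[p]\}$. Such a statement transfers to the countable model by the forcing theorem.

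Next, \Cref{prop:rigid-general} yields, for all generics $G,H$,
\[
M[g_G]=M[g_H]\iff g_G=g_H\iff g_G\,\Ezero\,g_H.
\]
The last two clauses say that the generic reals coincide on a tail of coordinates, so $(\ast_{\mathrm{coord}})$ holds for $a=g_G$ and $b=g_H$. This settles Problem 8.1 positively for the printed formulation. I would remark that the generic here is a real rather than an increasing ordinal sequence, which is the reading of ``generic sequence'' used in the condition.

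For Problem 8.3, the point is that the Jech--Shelah forcing is a ZFC construction and needs no large cardinal. Take, for instance, $M$ to be a countable transitive model of ZFC together with ``there is no inaccessible cardinal''. Such a model exists if ZFC has a countable transitive model at all: if a ctm $N$ has an inaccessible, cut it down to $L_\kappa^N$ at the least inaccessible $\kappa$ of $L^N$. By the previous steps, the forcing computed in $M$ still satisfies $(\ast_{\mathrm{coord}})$, so the condition cannot imply the existence of an inaccessible in $M$.

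The main obstacle is not the combinatorics, which is already done in \Cref{thm:JS} and \Cref{prop:rigid-general}. It is the metamathematical bookkeeping: making sure that rigidity of $\mathbb B(\mathbb P)$ and the equality $V[G]=V[g]$ are theorems of ZFC, or of the fragment $M$ satisfies, so that they relativize to $M$; and choosing a model $M$ that witnesses the failure of large-cardinal strength.
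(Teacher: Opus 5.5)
Your proposal is correct and follows the paper's proof: you apply \Cref{prop:rigid-general} to the Boolean completion of the Jech--Shelah forcing computed inside $M$, and you take $M$ with no inaccessible cardinal. The only difference is how that ground model is produced: you assume a countable transitive model of full ZFC and cut it down to $L_\kappa$ at the least inaccessible of its $L$, whereas the paper reflects the needed finite fragment plus ``no inaccessible'' in a constructible model and collapses a countable elementary submodel to some $L_\theta$, which needs no hypothesis beyond ZFC.
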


\begin{proof}
Apply \Cref{prop:rigid-general} to the Boolean completion of the
Jech--Shelah forcing.  The construction and the proof of rigidity are
carried out in ZFC and hence can be performed inside any sufficiently strong
$M$.  For the ground model, reflect the required finite fragment together
with the sentence ``there is no inaccessible cardinal'' in an appropriate
constructible model, and then take a countable elementary submodel and its
transitive collapse.  By condensation the resulting model is some
$L_\theta$ with the required properties.''
\end{proof}

\begin{remark}
For a rigid forcing, equality of generic extensions is equality of the
corresponding generic filters, and in the situation above it is equality of
the generic reals.  Thus the example is literal but in this sense rigid:
the same-extension classes of generic reals are singletons.  The next
example shows that this degeneracy is not necessary.
\end{remark}

\section{A nontrivial \texorpdfstring{$E_0$}{E0}-example}

Kanovei and Lyubetsky \cite{KL} construct in $L$ an $E_0$-invariant
Jensen-type perfect-tree forcing based on Silver forcing.  The feature we
need is their Lemma 7.4: if $G$ is generic for their forcing over $L$ and
$x_G$ is the generic real, then, inside $L[G]$, the reals generic for the
same forcing over $L$ are exactly
\[
    [x_G]_{\Ezero}.
\]
They also have $L[G]=L[x_G]$.

For the formulation of Section 8, let $M=L_\theta$ be a countable
transitive model of a fixed finite fragment of ZFC strong enough to carry
out the Kanovei--Lyubetsky construction and its proof through Lemma 7.4.
Perform their construction internally to $M$, through $\omega_1^M$, and
denote the resulting forcing by $\mathbb P^M$.

\begin{lemma}
\label{lem:KLrel}
Let $G\subseteq\mathbb P^M$ be generic over $M$, and let $x_G$ be its
generic real.  Then $M[G]=M[x_G]$, and
\[
 \{y\in M[G]\cap2^\omega:
       y\text{ is }\mathbb P^M\text{-generic over }M\}
   =[x_G]_{\Ezero}.
\]
\end{lemma}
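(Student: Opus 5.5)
The plan is to transfer the Kanovei--Lyubetsky argument from $L$ to $M=L_\theta$ rather than reprove it. Everything in their construction through Lemma 7.4 is carried out inside $L$ using only a fixed finite fragment of ZFC and the canonical well-order of $L$ (the $\Diamond$-like enumeration of countable models $L_\gamma$ used to seal dense sets at each stage). Since $M=L_\theta$ satisfies that fragment and $V=L$ holds in $M$, the construction relativizes verbatim. It produces $\mathbb P^M=\bigcup_{\xi<\omega_1^M}\mathbb P_\xi$ defined in $M$, and the properties proved about it (ccc-type sealing, $E_0$-invariance, and the genericity analysis) hold in $M$ for the relativized notions. The first step is therefore to check that each lemma of \cite{KL} up to 7.4 is a statement about forcing over $L$ expressible and provable in the fragment. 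A generic $G$ over $M$ is then exactly what their forcing theorem assumes, once the forcing relation is defined inside $M$.

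Second, $M[G]=M[x_G]$. In \cite{KL} the generic filter is recovered as $G=\{T\in\mathbb P:x_G\in[T]\}$. This set is definable in $M[x_G]$ from $\mathbb P^M\in M$ and $x_G$, so $G\in M[x_G]$. Conversely, $x_G$ is the intersection of the stems of conditions in $G$, so $x_G\in M[G]$.

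Third, the equality of the set of generic reals with $[x_G]_{\Ezero}$. For $\supseteq$ I use $E_0$-invariance of $\mathbb P^M$. Each finite modification $\sigma$ of binary sequences induces an automorphism of $\mathbb P^M$ that lies in $M$, since the conditions are Silver-type trees closed under such flips. Hence $y=\sigma(x_G)$ is generic for $\sigma''G$, which is generic over $M$. For $\subseteq$ I invoke the relativized Lemma 7.4. If $y\in M[G]$ is generic, then $y$ has a name $\dot y$ in $M$, and the Jensen-type sealing argument yields a condition in $G$ forcing that $\dot y$ is either $E_0$-equivalent to the generic real or avoids some dense set of $\mathbb P^M$. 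The latter contradicts the genericity of $y$.

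I expect the main obstacle to be the sealing step. In $L$ this uses that every name for a real is captured by some countable $L_\gamma\prec L_{\omega_2}$ at a stage $\xi<\omega_1$, where the construction diagonalizes against it. In $M$ the analogous capture needs $M$ to satisfy enough replacement to take such elementary submodels internally, and $\omega_1^M$ must behave like $\omega_1$ there. I would handle this by choosing $\theta$ so that $L_\theta$ reflects the relevant fragment, including ``$\omega_2$ exists'' and the condensation lemma used in \cite{KL}. The set of stages capturing a given name is then stationary in $\omega_1^M$ from $M$'s point of view, which is all their argument uses.
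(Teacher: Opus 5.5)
Your proposal is correct and is the same argument as the paper's. The paper's proof consists only of the observation that the proofs of Lemmas 7.1 and 7.4 of \cite{KL} use just the internally constructed sequence of forcings and ground-model maximal antichains, so they go through with $L$ replaced by $M=L_\theta$; your three steps simply unpack that observation (recovering $G$ as $\{T\in\mathbb P^M: x_G\in[T]\}$, the finite-flip automorphisms for $\supseteq$, and the relativized sealing argument for $\subseteq$). One small slip: $x_G$ is the union of the stems of conditions in $G$ (equivalently the unique point of $\bigcap_{T\in G}[T]$), not their intersection.
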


\begin{proof}
The proofs of Lemmas 7.1 and 7.4 of \cite{KL} use the internally constructed
sequence of forcing notions and the maximal antichains belonging to the
ground model.  The same proofs therefore apply with $L$ replaced by $M$.
\end{proof}

\begin{proposition}\label{prop:KL}
If $x,y$ are $\mathbb P^M$-generic reals over $M$, then
\[
    M[x]=M[y]
    \quad\Longleftrightarrow\quad
    x\,\Ezero\,y.
\]
Consequently $\mathbb P^M$ satisfies $(\ast_{\mathrm{coord}})$.
\end{proposition}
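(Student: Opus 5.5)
Both directions go through \Cref{lem:KLrel}, together with the observation that finite modifications of a real are coded in $M$. Let $x=x_G$ and $y=x_H$ for $\mathbb P^M$-generic filters $G,H$ over $M$. By \Cref{lem:KLrel} we have $M[G]=M[x]$ and $M[H]=M[y]$.

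For the forward direction, assume $M[x]=M[y]$. Then $y\in M[x]=M[G]$, and $y$ is a $\mathbb P^M$-generic real over $M$. The displayed identity in \Cref{lem:KLrel} says that the generic reals lying in $M[G]$ are exactly the members of $[x_G]_{\Ezero}$. Hence $y\in[x]_{\Ezero}$, that is, $x\,\Ezero\,y$. The only point to check is that "$y$ is $\mathbb P^M$-generic over $M$" is read in the same sense as in the lemma. It should be: $y$ is the generic real of the filter $H$, and the Kanovei--Lyubetsky generic filter is recovered from its real as the set of trees containing the branch.

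For the converse, assume $x\,\Ezero\,y$. The set $F=\{n: x(n)\neq y(n)\}$ is finite, so $F\in M$. Since $x,y\in 2^\omega$, $y$ is obtained from $x$ by flipping the bits in $F$, and symmetrically $x$ is obtained from $y$. Thus $y\in M[x]$ and $x\in M[y]$. Because $M[x]$ and $M[y]$ are the least transitive models of the relevant fragment containing $M$ and the respective real, we get $M[x]=M[y]$. Note that the converse does not use genericity at all.

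Finally, for binary sequences, $x\,\Ezero\,y$ holds exactly when $\exists N\ \forall n>N\ (x(n)=y(n))$, which is the right-hand side of $(\ast_{\mathrm{coord}})$ with the generic real as the generic object. So $\mathbb P^M$ satisfies $(\ast_{\mathrm{coord}})$. I expect no real obstacle here beyond the bookkeeping in the forward direction: confirming that \Cref{lem:KLrel} applies to every generic $y$ that lands in $M[G]$, and not only to reals of a special form.
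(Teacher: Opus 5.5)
Your proposal is correct and follows the paper's proof essentially verbatim. The forward direction applies \Cref{lem:KLrel} to the generic real $y\in M[x]$, and the converse uses the fact that the finite bit-flip relating $x$ and $y$ is coded in $M$; your extra bookkeeping makes explicit two points the paper leaves implicit, namely that $y$ is generic in the lemma's sense and that $E_0$ on $2^\omega$ is eventual coordinatewise equality.
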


\begin{proof}
If $M[x]=M[y]$, then $y\in M[x]$ and $y$ is
$\mathbb P^M$-generic over $M$.  By \Cref{lem:KLrel},
$y\in[x]_{\Ezero}$.

Conversely, if $x\,\Ezero\,y$, then $x$ and $y$ differ by a finite bit
flip coded in $M$.  Hence each belongs to the model generated by the other,
so $M[x]=M[y]$.
\end{proof}

\begin{corollary}\label{cor:KL-lc}
The printed condition $(\ast_{\mathrm{coord}})$ can hold nontrivially over
a model with no inaccessible cardinal: in a $\mathbb P^M$-generic
extension, the generic reals over $M$ belonging to that extension form the
entire countable $E_0$-class of the generic real.
\end{corollary}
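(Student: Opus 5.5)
The corollary has two ingredients: the choice of ground model, and the content of the $\mathbb P^M$-generic extensions. I will take the second from \Cref{lem:KLrel} and \Cref{prop:KL}. The first is handled by the same reflection-and-condensation argument used in the proof of \Cref{cor:rigid}.

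\emph{Step 1: the ground model.} Work in $L$. Fix the finite fragment $T$ of ZFC needed to carry out the Kanovei--Lyubetsky construction through Lemma 7.4. If $L$ itself has no inaccessible cardinal, reflect $T$ to some $L_\theta$ and take a countable elementary submodel. If $L$ has inaccessibles, let $\lambda$ be the least one and work in $L_\lambda$, which satisfies ZFC and has no inaccessible cardinal. Reflect $T$ together with the sentence ``there is no inaccessible cardinal'' to some $L_\theta$ with $\theta<\lambda$, and take a countable elementary submodel. By condensation, its transitive collapse is some countable $L_{\bar\theta}$. It satisfies $T$, has no inaccessible cardinal, and satisfies $V=L$, which is the hypothesis the construction of $\mathbb P^M$ uses internally. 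Set $M=L_{\bar\theta}$. I expect this step to need the most care: the absence of inaccessibles must be an elementary property passed down to the collapse, which is why the reflected sentence must be included in the reflection.

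\emph{Step 2: the condition holds.} Apply \Cref{prop:KL} to this $M$. It gives $M[x]=M[y]\iff x\,\Ezero\,y$ for all $\mathbb P^M$-generic reals $x,y$ over $M$, which is exactly $(\ast_{\mathrm{coord}})$.

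\emph{Step 3: nontriviality.} Fix $G$ and the generic real $x_G$. By \Cref{lem:KLrel}, the generic reals over $M$ that lie in $M[G]$ form exactly $[x_G]_{\Ezero}$.

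This class is countable, since each real in it is determined by $x_G$ together with a finite set of coordinates.

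It is also infinite, and in fact not a singleton. For each of the infinitely many finite $s\subseteq\omega$, flipping $x_G$ on $s$ gives a distinct real that is $\Ezero$-equivalent to $x_G$. Each such real lies in $M[x_G]$, because $s\in M$. By \Cref{lem:KLrel}, each of them is generic.

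So the equivalence classes are full countably infinite $\Ezero$-classes, in contrast with the singleton classes of the rigid example in \Cref{cor:rigid}. This establishes the corollary.
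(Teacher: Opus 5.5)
Your proposal is correct and follows the paper's proof. You obtain a countable $L_{\bar\theta}$ satisfying the required fragment together with ``there is no inaccessible cardinal'' by reflection and condensation, apply \Cref{prop:KL}, and read off the full-$\Ezero$-class statement from \Cref{lem:KLrel}; your case split on whether $L$ has an inaccessible just spells out the paper's ``appropriate constructible model'' (in the first case you should also reflect the sentence ``there is no inaccessible cardinal,'' as you note at the end of Step 1).
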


\begin{proof}
Choose a countable transitive
\[
    M=L_\theta\models
    V=L+\text{``there is no inaccessible cardinal.''}
\]
which satisfies the finite fragment required for \Cref{lem:KLrel}.  Such a
model is obtained by the same reflection-and-collapse argument used in
\Cref{cor:rigid}.  Now apply \Cref{prop:KL}.
\end{proof}

\section{Recoding real forcings for the range condition}

The real-forcing examples above can be converted uniformly into examples
for $(\ast_{\mathrm{range}})$.  For $x\in\omega^\omega$, define
\[
    c_x(n)=\omega\cdot n+x(n).
\]
Then $c_x$ is a strictly increasing sequence of ordinals below $\omega^2$,
and $x$ and $c_x$ are uniformly definable from one another.  Moreover,
\begin{equation}\label{eq:range-code}
    x\,\Ezero\,y
    \quad\Longleftrightarrow\quad
    \bigl|\ran(c_x)\symdiff\ran(c_y)\bigr|<\omega.
\end{equation}
Indeed, $\ran(c_x)$ contains exactly one point in the ordinal interval
$[\omega\cdot n,\omega\cdot(n+1))$, namely $\omega\cdot n+x(n)$.

For a finite sequence $s\in\omega^{<\omega}$, let
\[
    \widehat s(i)=\omega\cdot i+s(i).
\]
If $T\subseteq\omega^{<\omega}$ is a tree, put
\[
    \widehat T=\{\widehat s:s\in T\}\subseteq(\omega^2)^{<\omega}.
\]
Thus any tree forcing $\mathbb P$ on $\omega^{<\omega}$ has an isomorphic
presentation
\[
    \widehat{\mathbb P}=\{\widehat T:T\in\mathbb P\}
\]
whose generic branch corresponding to $x$ is $c_x$.  The transformation is
uniform, so $\widehat{\mathbb P}\in M$ whenever $\mathbb P\in M$.

\begin{proposition}[Recoding lemma]\label{prop:recoding}
Let $\mathbb P\in M$ be a tree forcing whose generic branch
$x\in\omega^\omega$ generates the generic extension, and suppose that for
any two $\mathbb P$-generic branches $x,y$ over $M$,
\[
    M[x]=M[y]
    \quad\Longleftrightarrow\quad
    x\,\Ezero\,y.
\]
Then the isomorphic forcing $\widehat{\mathbb P}$ has a strictly increasing
generic branch $c_x$ and satisfies
\[
    M[c_x]=M[c_y]
    \quad\Longleftrightarrow\quad
    \bigl|\ran(c_x)\symdiff\ran(c_y)\bigr|<\omega
\]
for all $\widehat{\mathbb P}$-generic branches $c_x,c_y$ over $M$.
\end{proposition}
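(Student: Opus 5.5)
The plan is to transport everything along the isomorphism $T\mapsto\widehat T$ and then use \eqref{eq:range-code}.

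First I will check that $\widehat{\mathbb P}$ is a tree forcing isomorphic to $\mathbb P$, via the map $T\mapsto\widehat T$ ordered by inclusion. The map $s\mapsto\widehat s$ is injective, length-preserving and respects initial segments, so $S\subseteq T$ iff $\widehat S\subseteq\widehat T$. The map is also defined in $M$. Consequently:
\begin{itemize}
\item the generic filters correspond: $G\mapsto\widehat G=\{\widehat T:T\in G\}$ is a bijection between $\mathbb P$-generic and $\widehat{\mathbb P}$-generic filters over $M$;
\item $M[G]=M[\widehat G]$;
\item the branch of $\widehat G$, namely $\bigcup\{\text{stems}\}$, equals $c_x$ whenever $x$ is the branch of $G$. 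This holds because $\widehat{\cdot}$ commutes with unions of chains of finite sequences.
\end{itemize}
In particular every $\widehat{\mathbb P}$-generic branch has the form $c_x$ for a $\mathbb P$-generic branch $x$. Each such $c_x$ is strictly increasing, since $c_x(n)<\omega\cdot(n+1)\le c_x(n+1)$.

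Next, the recoding $x\mapsto c_x$ and its inverse $c\mapsto(n\mapsto$ the unique $k$ with $c(n)=\omega\cdot n+k)$ are absolute, $\Delta_0$-definable operations. Hence $M[x]=M[c_x]$ for every real $x$, because each is constructible in the model generated by the other. So for $\mathbb P$-generic $x,y$,
\[
M[c_x]=M[c_y]\iff M[x]=M[y]\iff x\,\Ezero\,y\iff\bigl|\ran(c_x)\symdiff\ran(c_y)\bigr|<\omega .
\]
The middle equivalence is the hypothesis. The last is \eqref{eq:range-code}, which I will verify directly. The range of $c_x$ meets each block $[\omega\cdot n,\omega\cdot(n+1))$ in exactly one point. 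So the symmetric difference within block $n$ has size $2$ if $x(n)\neq y(n)$ and size $0$ otherwise. The total symmetric difference therefore has size twice the number of disagreements, which is finite iff $x\,\Ezero\,y$.

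I expect the main obstacle to be making precise that $\widehat{\mathbb P}$-generic branches are exactly the $c_x$ with $x$ $\mathbb P$-generic. This is only bookkeeping about the isomorphism, namely that genericity is preserved by an isomorphism lying in $M$. It is not a substantive difficulty, and everything else is routine.
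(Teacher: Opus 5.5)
Your proposal is correct and follows the paper's proof. The paper likewise observes that $x$ and $c_x$ are uniformly interdefinable, so $M[x]=M[c_x]$, and then chains the hypothesis with \eqref{eq:range-code}; your extra bookkeeping covers what the paper settles in the discussion before the proposition, namely that generics and branches transport along $T\mapsto\widehat T$, that $c_x$ is strictly increasing, and the block-by-block count behind \eqref{eq:range-code}.
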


\begin{proof}
Since $x$ and $c_x$ are uniformly interdefinable,
$M[x]=M[c_x]$.  The conclusion now follows from the hypothesis and
\eqref{eq:range-code}.
\end{proof}

\begin{corollary}\label{cor:range}
Both the Jech--Shelah forcing and the Kanovei--Lyubetsky forcing have
isomorphic presentations satisfying $(\ast_{\mathrm{range}})$.  These
presentations can be constructed over models of just plain ZFC. Hence Problem 8.1 has a
positive answer and Problem 8.3 has a negative answer also for the corrected
range condition.
\end{corollary}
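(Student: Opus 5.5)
The plan is to apply \Cref{prop:recoding} twice: once to the Jech--Shelah forcing of \Cref{thm:JS} and once to the Kanovei--Lyubetsky forcing $\mathbb P^M$. In each case three things must be checked. First, the forcing is a tree forcing on $\omega^{<\omega}$; for $\mathbb P^M$ the trees live on $2^{<\omega}\subseteq\omega^{<\omega}$, so the hat-transformation applies without change. Second, the generic branch generates the extension. Third, for generic branches $x,y$ over $M$ we have $M[x]=M[y]\iff x\,\Ezero\,y$.

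For the Jech--Shelah forcing, the second point is part of \Cref{thm:JS}. The third point follows from \Cref{prop:rigid-general}, which gives $M[x_G]=M[x_H]\iff x_G=x_H\iff x_G\,\Ezero\,x_H$. For $\mathbb P^M$, $M[G]=M[x_G]$ is part of \Cref{lem:KLrel}, and the $\Ezero$-equivalence is exactly \Cref{prop:KL}. Once \Cref{prop:recoding} has been applied, the presentations $\widehat{\mathbb P}$ have strictly increasing generic branches $c_x$ and satisfy $(\ast_{\mathrm{range}})$. They are isomorphic to the original forcings via $T\mapsto\widehat T$, and this map lies in $M$ because the transformation is uniform.

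For the large-cardinal claims, the ground models are the ones already chosen in \Cref{cor:rigid} and \Cref{cor:KL-lc}. These are countable transitive $L_\theta$ satisfying the required finite fragment together with ``there is no inaccessible cardinal.'' Recoding does not change $M$; it only replaces $\mathbb P$ by $\widehat{\mathbb P}\in M$. So the range condition holds over models without inaccessibles, which answers Problem 8.3 negatively in the range formulation. Problem 8.1 gets a positive answer from the same two examples.

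The phrase ``over models of just plain ZFC'' I read in the same sense as the earlier corollaries: the constructions use only ZFC (with $V=L$ in the Kanovei--Lyubetsky case), so a suitable countable transitive model of the needed fragment exists without any large cardinals. Checking this is the only delicate step. For Jech--Shelah, I would note that the construction and the rigidity proof are ZFC theorems and relativize to $M$, as argued in \Cref{cor:rigid}. For Kanovei--Lyubetsky, the hypothesis is $M=L_\theta$, which is automatically available by condensation. Beyond this, the argument is pure bookkeeping: each hypothesis of \Cref{prop:recoding} is matched to a previously established result.
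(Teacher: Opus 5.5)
Your proposal is correct and follows the paper's own proof: you feed \Cref{prop:rigid-general} (for Jech--Shelah) and \Cref{prop:KL} (for Kanovei--Lyubetsky) into \Cref{prop:recoding}, and you take the ground models from \Cref{cor:rigid} and \Cref{cor:KL-lc}. Your extra checks make explicit what the paper leaves implicit: that trees on $2^{<\omega}\subseteq\omega^{<\omega}$ are covered, that $M[G]=M[x_G]$ holds, and that $T\mapsto\widehat T$ lies in $M$.
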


\begin{proof}
For the Jech--Shelah forcing, \Cref{prop:rigid-general} gives the
hypothesis of \Cref{prop:recoding}; for the Kanovei--Lyubetsky forcing, the
hypothesis is \Cref{prop:KL}.  Apply \Cref{prop:recoding}.  The ground model
may be chosen to satisfy the finite fragments needed for both constructions,
as in \Cref{cor:rigid,cor:KL-lc}.
\end{proof}

\begin{remark}
The recoded examples settle the corrected problem under the unrestricted
reading in which one asks for a forcing notion with a distinguished
increasing generic sequence.  They are isomorphic presentations of real
forcings.  By contrast, \Cref{prop:magidor} gives a direct example whose
generic sequence is genuinely Magidor generic and cofinal in the relevant
measurable cardinal.
\end{remark}


\begin{thebibliography}{9}

\bibitem{Tianyuan2026}
G.~Barmpalias, S.~Gao, J.~He, T.~Kihara, A.~Nies, D.~Schrittesser,
T.~Slaman, C.-M.~Tran, D.~Turetsky, P.~Welch, L.~Yu, and H.~Zhang,
\emph{Open Problems in Mathematical Logic},
arXiv:2608.26628, 2026.

\bibitem{CS}
F.~Calderoni and D.~Sinapova,
\emph{Forcing, genericity, and CBERS},
arXiv:2503.14811, 2025.

\bibitem{Fuchs}
G.~Fuchs,
\emph{On sequences generic in the sense of Magidor},
J. Symbolic Logic \textbf{79} (2014), no.~4, 1286--1314.

\bibitem{JS}
T.~Jech and S.~Shelah,
\emph{A complete Boolean algebra that has no proper atomless complete
subalgebra},
J. Algebra \textbf{182} (1996), no.~3, 748--755.

\bibitem{KL}
V.~Kanovei and V.~Lyubetsky,
\emph{A definable $E_0$-class containing no definable elements},
Arch. Math. Logic \textbf{54} (2015), 711--723.

\bibitem{Smythe}
I.~B. Smythe,
\emph{Equivalence of generics},
Arch. Math. Logic \textbf{61} (2022), 795--812.

\end{thebibliography}
\end{document}